\documentclass[12pt]{article}
\usepackage{amsmath,amssymb, amsfonts, amsthm,amscd}
\usepackage[T2A]{fontenc}
\usepackage[cp1251]{inputenc}
\usepackage[english]{babel}
\usepackage{graphicx}

\pagestyle{empty}
\theoremstyle{plain}
\newtheorem{theorem}{Theorem}
\newtheorem{lemma}{Lemma}
\newtheorem{proposition}{Proposition}
\newtheorem{corollary}{Corollary}

\theoremstyle{definition}

\begin{document}
\begin{center}
{\huge Clear elements and clear rings}
\end{center}
\vskip 0.1cm \centerline{{\Large Bohdan Zabavsky, \; Olha Domsha, \;  Oleh Romaniv}}

\vskip 0.3cm

\centerline{\footnotesize{Department of Mechanics and Mathematics, Ivan Franko National University;}}

\centerline{\footnotesize{Lviv Regional Institute for Public Administration of the National}}
\centerline{\footnotesize{Academy for Public Administration under the President of Ukraine;}}

\centerline{\footnotesize{Department of Mechanics and Mathematics, Ivan Franko National University;}}
 \centerline{\footnotesize{zabavskii@gmail.com, \; olya.domsha@ukr.net, \; oleh.romaniv@lnu.edu.ua}}
\vskip 0.5cm

\centerline{\footnotesize{May, 2020}}
\vskip 0.7cm

\footnotesize{\noindent\textbf{Abstract:} \textit{An element in a ring $R$ is called clear if it is the sum of unit-regular element and unit. An associative ring is clear if every its element is clear. 
In this paper we defined clear rings and extended many results to wider class. Finally, we  proved that a commutative B\'ezout domain is an elementary divisor ring if and only if every full matrix order 2 over it is nontrivial clear.} }

\vspace{1ex}
\footnotesize{\noindent\textbf{Key words and phrases:} \textit{B\'ezout domain; clean element; unit-regular element; full matrix; elementary divisor ring; clear element; clear ring}

}

\vspace{1ex}
\noindent{\textbf{Mathematics Subject Classification}}: 06F20, 13F99.

\vspace{1,5truecm}

\normalsize

\section{Introduction}

The work in this paper is prompted by the looking at the two sets $ U(R)$ and $ U_{reg}(R)$ in ring $R$, which denote, respectively the unit group and the set of unit-regular elements in $R$. Certainly the units and unit-regular elements are key elements determining the structure of the ring.

The study of rings generated additively by their units started in 1953 when K.G.~Wolfson~\cite{intro1Wolf} and D.~Zelinsky \cite{intro2Ze} proved independently, that every linear transformation of a vector space $V$ over a division ring $D$ is the sum of two nonsingular linear transformations except when $\mathrm{dim}\,V=1$ and $D=\mathbb{Z}_2$.  This implies, that the ring of linear transformations $\mathrm{End}_D(V)$ is the sum of two units except for one obvious case, when $V$ is a one-dimensional space over $\mathbb{Z}_2$.

The ring in which every element is the sum of two units Vamos called 2-good rings~\cite{intro3Va}. The ring $R$ is called  von Neumann  regular if for any $a\in R$ there exists $x\in R$ such that $axa=a$. In 1958 Skornyakov \cite[Problem~20, p.167]{intro4Sco} formulated the question: "Is every element of  von Neumann regular ring (which does not have $\mathbb{Z}_2$ as a factor-ring) the sum of units?"

According to Ehrlich \cite{3Er} element $a\in R$ is \textit{unit-regular} if $a=aua$ for some unit $u\in  U(R)$. It is easy to see that $a$ is unit-regular if and only if $a$ is an idempotent times $a$ unit, if and only if $a$ is unit times idempotent.
Ring is called \textit{unit-regular} if every its element is such.

Note that if $a\in R$ is unit-regular element and 2 is unit element in $R,$ then $a$ can be written as $a=eu$, where $e$ is idempotent and $u$ unit in $R$. Now, since $2$ is unit in $R$, $1+e$ is unit with $(1+e)^{-1}=1-2^{-1}e$. This gives that $e=(e+1)-1$ is the sum of two units and hence $a$ is the sum of two units.

We say that ring $R$ is \textit{Henriksen elementary divisor ring} if for square matrix $A\in R^{n\times n}$ there exist invertible matrices $P,Q\in R^{n\times n}$ such that $PAQ$ is diagonal matrix \cite[p.10]{1Za}.

\begin{theorem}\cite[Theorem 11]{intro5Hen}
  Let $R$ be Henriksen elementary divisor ring. Then $R^{n\times n}$, $n>1$, are 2-good ring.
\end{theorem}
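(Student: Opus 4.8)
The plan is to leverage the defining property of a Henriksen elementary divisor ring to reduce everything to a single concrete fact about diagonal matrices, and then to produce that decomposition by an explicit formula valid over an arbitrary, possibly non-commutative, ground ring. Given $A \in R^{n\times n}$ with $n > 1$, first I would choose invertible $P, Q \in R^{n\times n}$ with $PAQ = D := \mathrm{diag}(d_1, \dots, d_n)$. Since then $A = P^{-1}DQ^{-1}$, it suffices to find invertible $U, V \in R^{n\times n}$ with $D = U + V$: indeed $A = P^{-1}UQ^{-1} + P^{-1}VQ^{-1}$ would then be a sum of two units of $R^{n\times n}$. So the whole theorem reduces to the claim that a diagonal $n\times n$ matrix over $R$, with $n \ge 2$, is a sum of two invertible matrices.

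To prove that claim I would write down the pair explicitly. Let $E_{ij}$ denote the standard matrix units and let $C \in R^{n\times n}$ be the cyclic permutation matrix having entry $1$ in each position $(i,i+1)$ for $1 \le i \le n-1$ and in position $(n,1)$; then $C^n = I$, so $C$ is a unit. Set
$$U = d_1E_{11} - C, \qquad V = D - U = \mathrm{diag}(0, d_2, \dots, d_n) + C,$$
so that $U + V = D$. For $n = 2$ this is just $\bigl(\begin{smallmatrix} d_1 & 0 \\ 0 & d_2\end{smallmatrix}\bigr) = \bigl(\begin{smallmatrix} d_1 & -1 \\ -1 & 0\end{smallmatrix}\bigr) + \bigl(\begin{smallmatrix} 0 & 1 \\ 1 & d_2\end{smallmatrix}\bigr)$. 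The point of this choice is that $U$ involves only $d_1$ while $V$ has a zero in its top-left corner, and that each of $U$ and $V$ factors as $\pm C$ times a matrix of the form $I + (\text{nilpotent})$, so the diagonal entries of $D$ cannot obstruct invertibility.

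Finally I would verify that $U$ and $V$ are units of $R^{n\times n}$ without invoking determinants, since $R$ need not be commutative, by factoring each through $C$. One checks directly that $U = -C\,(I - d_1E_{21})$; here $E_{21}^2 = 0$, so $I - d_1E_{21}$ is a unit with inverse $I + d_1E_{21}$, whence $U$ is a unit. Likewise $V = C\,(I + S)$, where $S := C^{-1}\,\mathrm{diag}(0,d_2,\dots,d_n)$ has its nonzero entries along the single directed path $2 \to 3 \to \cdots \to n \to 1$, so $S^n = 0$, $I + S$ is a unit, and $V$ is a unit. The only step that really needs care is exactly this last verification over a non-commutative ring: the explicit factorizations settle it, but one must keep track of the sides of multiplication and re-examine the small cases $n = 2, 3$. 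As an alternative to the global matrix $C$, one may instead split $D$ into $2\times 2$ diagonal blocks when $n$ is even, and into $2\times 2$ blocks together with one $3\times 3$ block when $n$ is odd, handling the blocks by hand; note that $n = 1$ is genuinely excluded, in keeping with the Wolfson--Zelinsky obstruction recalled in the introduction.
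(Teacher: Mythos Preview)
The paper does not supply its own proof of this statement: it is quoted verbatim as \cite[Theorem~11]{intro5Hen} and used only as background in the introduction, so there is nothing in the paper to compare your argument against line by line.

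That said, your proposal is a correct and self-contained proof. The reduction step is exactly right: by the definition of a Henriksen elementary divisor ring recalled in the paper, every square $A$ is $P^{-1}DQ^{-1}$ with $D$ diagonal and $P,Q$ invertible, so it suffices to write any diagonal $D\in R^{n\times n}$ (with $n\ge 2$) as a sum of two units. Your explicit decomposition $D=U+V$ with $U=d_1E_{11}-C$ and $V=\mathrm{diag}(0,d_2,\dots,d_n)+C$ works over an arbitrary (possibly non-commutative) $R$, and your factorizations $U=-C(I-d_1E_{21})$ and $V=C(I+S)$ with $S=C^{-1}\mathrm{diag}(0,d_2,\dots,d_n)$ correctly exhibit each summand as a unit times a unipotent: $E_{21}^2=0$, and the support of $S$ is the acyclic path $2\to 3\to\cdots\to n\to 1$, so $S^n=0$ regardless of the ring elements sitting on the edges. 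This is in the same spirit as Henriksen's original argument (diagonalize, then decompose the diagonal explicitly); your use of the single cyclic permutation matrix $C$ gives a uniform formula for all $n\ge 2$, while the block alternative you mention at the end is the more traditional presentation. Either route is fine, and your caution about avoiding determinants in the non-commutative case is well placed.
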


It is important to note that every unit-regular ring is Henriksen elementary divisor ring \cite{intro6Hen}.
Also note that unit and idempotent are unit-regular elements.

The next class of rings generated additively by their units and idempotents is clean ring. The notion of it was introduced in 1977 by Nicholson in \cite{4Ni}.
Thereafter such rings and their variations were intensively studied by many authors. Recall that element of ring $R$ is \textit{clean} if it is the sum of idempotent and unit of R. The ring R is \textit{clean} if every element of R is such \cite{4Ni}. Nicholson also showed that in ring where idempotents are central, any unit-regular element is clean \cite[Proposition~1.8]{4Ni}, but in noncommutative ring unit-regular element is not necessarily clean. For example, the unit-regular matrix $\left(\begin{smallmatrix} 12 & 5 \\ 0 & 0 \end{smallmatrix}\right)\in \mathbb{Z}^{2\times2}$ is not clean \cite[Example 3.12]{5KhLa}. At the same time note that Camillo and Yu showed that any unit-regular ring is clean \cite[Theorem~5]{intro7CaHu}.

 All commutative von Neumann regular rings, local rings, semi-perfect rings and ring $R^{n\times n}$ for any clean ring $R$ are the examples of clean rings. Clean rings are closely connected to some important notions of the ring theory. Such rings are of interest since they constitute a subclass of the so-called   exchange rings in the theory of noncommutative rings.

Module $M_R$ has the exchange property if for every module $A_R$ and any two decompositions
$$
A=M'\oplus N=\oplus_{i\in I} A_i
$$
with $M'\cong M$, there exist submodules $A_i'\subset A_i$ such that
$$
A=M'\oplus(\oplus_{i\in I} A_i).
$$
The module $M_R$ has finite exchange property if the above condition is satisfied and the index set $I$ is finite. Warfield \cite{intro11Warfi} called ring $R$ an \textit{exchange ring} if $R_R$ has the finite exchange property and he showed that this definition is left-right symmetric. Independently, Goodearl, Warfield \cite{intro12GoWar} and Nicholson \cite{4Ni} obtained the very useful characterization of $R$: it is an exchange ring if and only if for any $a\in R$ there exists idempotent $e\in R$ such that $e\in aR$ and $1-e\in (1-a)R$. In this case the element $a$ is called an exchange element.

In this paper we propose the concept of clear ring based on the concept of clear element. All, these results are mainly centered on the application around the classical and rather ancient problem of describing all the of an elementary divisor rings. An overview can be found  in~\cite{1Za}. In the case of commutative rings in ~\cite{2Za} the connection of elementary divisor ring with the existence of clean elements of these rings is proven. Therefore, the problem of studying the matrix rings over elementary divisor rings in this aspect is especial.

We will study matrix rings over elementary divisor ring and reveal connection to the theory of full matrices over certain classes of rings.

Our main results are the following.

\begin{theorem}\label{theor1.1}
Let $R$ be a commutative elementary divisor ring and $A$ is a full nonsingular matrix of $R^{2\times2}$. Then exist invertible matrices $P, Q\in GL_2(R)$ such that $PAQ$ is nontrivial clear element of $R^{2\times2}$.
\end{theorem}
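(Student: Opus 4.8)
The plan is to use the elementary divisor hypothesis to replace $A$ by a very simple equivalent matrix and then to write that matrix down explicitly as a unit-regular matrix plus an invertible one.

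Since $R$ is a commutative elementary divisor ring, there are $P_0,Q_0\in GL_2(R)$ with $P_0AQ_0=\operatorname{diag}(d_1,d_2)$ and $d_1\mid d_2$. The ideal of $R$ generated by the entries of a matrix is unchanged under multiplication on either side by an invertible matrix, so it coincides for $A$ and for $\operatorname{diag}(d_1,d_2)$; for the latter it is $(d_1,d_2)=(d_1)$. Since $A$ is full, this ideal is all of $R$, hence $d_1\in U(R)$; replacing $Q_0$ by $Q_0\operatorname{diag}(d_1^{-1},1)$ we may assume
\[
P_0AQ_0=\begin{pmatrix}1&0\\0&d\end{pmatrix},
\]
where $d$ is a non-zero-divisor because $A$ is nonsingular.

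Next I would verify directly that
\[
\begin{pmatrix}1&0\\0&d\end{pmatrix}=\begin{pmatrix}0&0\\1&d-1\end{pmatrix}+\begin{pmatrix}1&0\\-1&1\end{pmatrix}=:W+U
\]
is a nontrivial clear presentation. Indeed $U\in GL_2(R)$ since $\det U=1$, and $W$ is unit-regular: one has $WVW=W$ with the unit $V=\left(\begin{smallmatrix}0&1\\-1&0\end{smallmatrix}\right)$, or equivalently $W=P'\operatorname{diag}(1,0)\,Q'$ for $P'=\left(\begin{smallmatrix}0&1\\1&0\end{smallmatrix}\right),\ Q'=\left(\begin{smallmatrix}1&d-1\\0&1\end{smallmatrix}\right)\in GL_2(R)$, so $W=\bigl(P'\operatorname{diag}(1,0)(P')^{-1}\bigr)\bigl(P'Q'\bigr)$ is an idempotent times a unit. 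Moreover $W$ is a genuinely nontrivial unit-regular element: $W\neq 0$, and $W\notin U(R^{2\times2})$ since $\det W=0$. Hence $\operatorname{diag}(1,d)$ is a nontrivial clear element of $R^{2\times2}$, and taking $P:=P_0,\ Q:=Q_0$ proves the theorem. (Transporting the decomposition back through $P_0^{-1}$ and $Q_0^{-1}$, and using that both ``idempotent times a unit'' and ``invertible'' survive two-sided multiplication by units, one sees moreover that $A$ itself is a nontrivial clear element of $R^{2\times2}$.)

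The whole argument hinges on the reduction above, and specifically on the fact that fullness forces the first invariant factor $d_1$ to be a unit; this is what lets us land on $\operatorname{diag}(1,d)$, and it is indispensable. Over a domain, a diagonal matrix $\operatorname{diag}(d_1,d_2)$ with $d_1$ a non-unit has no nontrivial clear presentation at all: if $W'+U'=\operatorname{diag}(d_1,d_2)$ with $W'$ a nontrivial (hence non-invertible) unit-regular matrix, then $\det W'=0$, so, writing $W'=(w'_{ij})$, one gets $\det U'=d_1d_2-d_1w'_{22}-d_2w'_{11}\in(d_1)$, contradicting $U'\in GL_2(R)$. (Henriksen's theorem quoted above does give $A=U_1+U_2$ with both summands invertible, but that is a clear presentation of the trivial kind — the unit-regular summand is itself a unit — so producing an honestly nontrivial one is the real content.) Everything else — the displayed identity and the checks that $W$ is unit-regular, nonzero and non-invertible — is routine.
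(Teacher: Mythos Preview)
Your proof is correct and follows essentially the same strategy as the paper: reduce $A$ to $\operatorname{diag}(1,d)$ via the elementary divisor property and fullness (the paper isolates this as a separate lemma), then exhibit an explicit decomposition. The only cosmetic difference is that the paper first right-multiplies $\operatorname{diag}(1,d)$ by the permutation $\left(\begin{smallmatrix}0&1\\1&0\end{smallmatrix}\right)$ and writes the resulting $\left(\begin{smallmatrix}0&1\\d&0\end{smallmatrix}\right)$ as the \emph{idempotent} $\left(\begin{smallmatrix}0&0\\d+1&1\end{smallmatrix}\right)$ plus the unit $\left(\begin{smallmatrix}0&1\\-1&-1\end{smallmatrix}\right)$ (so a clean, hence clear, presentation), whereas you stay with $\operatorname{diag}(1,d)$ and produce a genuinely unit-regular-but-not-idempotent summand $W$; both choices yield a nontrivial clear element, and your version has the mild advantage of not needing the extra permutation step.
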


\begin{theorem}\label{theor1.2}
Let $R$ be a commutative elementary divisor ring. Then every full nonsingular matrix  $A\in R^{2\times2}$ is nontrivial clear.
\end{theorem}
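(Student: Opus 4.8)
The plan is to obtain Theorem~\ref{theor1.2} as a quick consequence of Theorem~\ref{theor1.1}, the point being that the property of being a nontrivial clear element of $R^{2\times2}$ is invariant under the equivalence $B\mapsto PBQ$ with $P,Q\in GL_2(R)$. So the first task is an invariance lemma, and the second is simply to feed in the conclusion of Theorem~\ref{theor1.1}.

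First I would record the behaviour of unit-regular elements under two-sided multiplication by invertibles. If $b\in R^{2\times2}$ is unit-regular, say $b=bvb$ with $v\in GL_2(R)$, then for any $P,Q\in GL_2(R)$,
\[
(PbQ)\,\bigl(Q^{-1}vP^{-1}\bigr)\,(PbQ)=P(bvb)Q=PbQ ,
\]
and $Q^{-1}vP^{-1}\in GL_2(R)$, so $PbQ$ is again unit-regular. Since also $PuQ$ is a unit of $R^{2\times2}$ whenever $u$ is, a clear decomposition $B=b+u$ yields the clear decomposition $PBQ=PbQ+PuQ$. Moreover, every feature that makes such a decomposition ``nontrivial'' survives: $PbQ=0$ forces $b=0$ (as $P,Q$ are invertible); $PbQ$ a unit forces $b=P^{-1}(PbQ)Q^{-1}$ a unit; and if $b=ev$ with $e$ an idempotent, then $PbQ=(PeP^{-1})(PvQ)$ with $PeP^{-1}$ an idempotent that is nontrivial precisely when $e$ is. Hence $B$ nontrivial clear implies $PBQ$ nontrivial clear.

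Now, given a full nonsingular $A\in R^{2\times2}$, Theorem~\ref{theor1.1} provides $P,Q\in GL_2(R)$ for which $PAQ$ is a nontrivial clear element of $R^{2\times2}$. Applying the invariance just established, with the invertible matrices $P^{-1}$ and $Q^{-1}$, to $B=PAQ$ shows that $A=P^{-1}(PAQ)Q^{-1}$ is itself a nontrivial clear element, which is exactly the assertion.

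This reduces the whole statement to Theorem~\ref{theor1.1}; I do not anticipate a genuine obstacle in the present theorem, as the only thing to check is that units, unit-regular elements, nonzero-ness and non-invertibility are all preserved by multiplication on both sides by invertible matrices, which they are. The substantive difficulty lies entirely in Theorem~\ref{theor1.1} --- constructing, for a commutative elementary divisor ring, the explicit $P,Q$ that bring $A$ into clear form.
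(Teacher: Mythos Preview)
Your proof is correct and is essentially the paper's own argument: the paper also pulls back the decomposition $PAQ=E+U$ from Theorem~\ref{theor1.1} via $P^{-1},Q^{-1}$, verifying directly that $P^{-1}EQ^{-1}$ is unit-regular (with unit $QP$) and $P^{-1}UQ^{-1}$ is a unit. You are somewhat more explicit than the paper in checking that the ``nontrivial'' part of the clear decomposition is preserved, which the paper leaves implicit.
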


\begin{theorem}\label{theor1.3}
Let $R$ be a semi-simple commutative B\'ezout domain. The next statements are equivalent:
\begin{enumerate}
  \item $R$ is an elementary divisor ring;
  \item  any full nonsingular matrix  of $R^{2\times2}$ is  nontrivial clear.
\end{enumerate}
\end{theorem}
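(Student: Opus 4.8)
\medskip
\noindent\textit{A proof proposal.}

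The implication $(1)\Rightarrow(2)$ requires nothing new: it is precisely Theorem~\ref{theor1.2} (obtained through Theorem~\ref{theor1.1}), and the semi-simplicity hypothesis plays no role here.

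For $(2)\Rightarrow(1)$ the plan is the following. Since a commutative B\'ezout domain is a Hermite ring, Kaplansky's criterion (see~\cite{1Za}) says that $R$ is an elementary divisor ring if and only if every $2\times2$ matrix over $R$ admits a diagonal reduction; so it suffices to diagonalise an arbitrary $M\in R^{2\times2}$. If $\det M=0$ then $M$ has rank at most one, hence over a B\'ezout domain $M$ is a column times a unimodular row, and elementary row and column operations bring it to diagonal form. Thus I may assume $\det M\neq0$; then $M$ is a full nonsingular matrix (over a domain a nonsingular matrix is full), so by $(2)$ there is a nontrivial clear decomposition $M=W+V$ with $W$ unit-regular and $V\in GL_2(R)$.

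Write $W=EU$ with $E^{2}=E$ and $U\in GL_2(R)$. Since every finitely generated projective module over a commutative B\'ezout domain is free, the idempotent $E$ splits $R^{2}$ into a direct sum of free modules, hence is conjugate in $GL_2(R)$ to one of $0$, $I$, $E_{0}:=\bigl(\begin{smallmatrix}1&0\\0&0\end{smallmatrix}\bigr)$. Writing $E=SE'S^{-1}$ with $S\in GL_2(R)$ and $E'\in\{0,I,E_{0}\}$, we get $S^{-1}M=W'+V'$, where $W'=E'(S^{-1}U)$ and $V'=S^{-1}V\in GL_2(R)$; here $W'$ is respectively $0$, a unit, or a matrix with zero second row. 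In the last (rank-one) case, right multiplication by $(V')^{-1}$ gives $(S^{-1}M)(V')^{-1}=W'(V')^{-1}+I$, and $W'(V')^{-1}$ again has zero second row, so this matrix equals $\bigl(\begin{smallmatrix}*&*\\0&1\end{smallmatrix}\bigr)$; a single row operation makes it diagonal, so $M$ is equivalent to a diagonal matrix. If $W'=0$ then $S^{-1}M=V'\in GL_2(R)$ and $M$ is equivalent to $I$. Hence, apart from the case where $W'$ is a unit, $M$ is diagonalisable; granting that case, every $M$ with $\det M\neq0$ is diagonalisable, and $R$ is an elementary divisor ring by the cited criterion.

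The case left open -- $W'$ a unit, so that $M$ is merely a sum of two units -- is where I expect the real difficulty to lie, since the reduction above does not apply to it. One would have to show that a full nonsingular $M$ all of whose clear decompositions are $2$-good decompositions is nonetheless diagonalisable; I anticipate that this is exactly where the precise force of the word ``nontrivial'' in the hypothesis (presumably arranged so as to exclude, or to refine, such decompositions) and the semi-simplicity of $R$ -- unused up to this point -- enter. Everything else -- the treatment of singular matrices, the splitting of the idempotent afforded by projective-freeness, and the final row operation diagonalising an upper-triangular matrix with a unit in the $(2,2)$-entry -- is routine over a B\'ezout domain.
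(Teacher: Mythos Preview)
Your implication $(1)\Rightarrow(2)$ agrees with the paper: it is Theorem~\ref{theor1.2}.

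For $(2)\Rightarrow(1)$ your route is genuinely different from the paper's and is in fact essentially complete: the case you flag as ``left open'' is ruled out by the very hypothesis you suspect is relevant. Recall that a clear element $a$ is \emph{nontrivial} precisely when there is a decomposition $a=r+u$ with $r$ unit-regular, $u$ a unit, $r\neq0$, and $r\notin U(R)$. Thus you may \emph{choose} $M=W+V$ with $W\neq 0$ and $W\notin GL_2(R)$; then the idempotent $E$ in $W=EU$ is neither $0$ nor $I$, and your projective-freeness argument forces $E'=E_{0}$. The ``$2$-good'' case never arises, and your explicit diagonalisation handles every full nonsingular $M$. Note, incidentally, that this argument never uses semi-simplicity, so it actually yields the equivalence for an arbitrary commutative B\'ezout domain.

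The paper proceeds quite differently. From $A$ clear it uses Lemma~\ref{newlemma6} to obtain a unit $U$ with $AU$ clean; by Proposition~\ref{prop2.1}(iii) $AU$ is an exchange element, so Nicholson's theory produces an idempotent $E\in AU\,R^{2\times2}=A\,R^{2\times2}$; semi-simplicity (via $J(R^{2\times2})=J(R)^{2\times2}=0$) is then invoked to force $E$ to be nontrivial, and Dubrovin's criterion (Proposition~\ref{prop2.1}(ii)) finishes the proof. Thus the paper trades the adjective ``nontrivial'' in hypothesis~(2) for the semi-simplicity assumption when manufacturing a nontrivial idempotent in $A\,R^{2\times2}$, and it bypasses any concrete diagonalisation. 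Your approach is more hands-on and, as noted, does without semi-simplicity; the paper's approach ties the result into the exchange-ring machinery and into Dubrovin's characterisation of elementary divisor B\'ezout domains.
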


\section{Notations and preliminary results}

For ease of explanation, let's do some notations and recall some definitions.

Throughout the paper we suppose $R$ is an   associative ring with  non-zero unit and $U(R)$ its group of units.
The vector space of matrices over the ring $R$ of size $k\times l$ is denoted by $R^{k\times l}$ and group of units of the ring $R^{n\times n}$ by $GL_n(R)$. The Jacobson radical of $R$ is denoted by $J(R)$. If $J(R)=0$, we say that $R$ is a semi-simple.

A ring $R$ is called a \textit{right} (\textit{left}) \textit{B\'ezout rin}g if each   finitely generated right (left) ideal of $R$  is principal. A ring $R$ which  is simultaneously  right  and  left B\'ezout ring is called a {\it B\'ezout ring}.

Matrix $A$ over ring $R$ \textit{admits diagonal reduction} if there exist invertible matrices $P$ and $Q$ such that $PAQ$ is a diagonal matrix $(d_{ij})$ for which $d_{ii}$ is a total divisor $d_{i+1,i+1}$ (i.e. $Rd_{i+1,i+1} R\subseteq d_{ii}R\cap Rd_{ii}$) for each~$i$. Ring $R$ is called \textit{an elementary divisor ring} provided that every matrix over $R$ admits a diagonal reduction~\cite{1Za}.


We can define ranks of a  matrix $A$ over $R$  on their  rows  $\rho_r(A)$ and their columns $\rho_c(A)$, respectively (see \cite[p.\,247]{Cohn}).
The smallest  $m\in \mathbb{N}$ such that a matrix $A\in R^{k\times l}$ is a product of two matrices of size $k\times m$  and $m\times l$,  is called the {\it inner rank $\rho(A)$ } of $A$. Note that $\rho(A)\leq \min\{\rho_r(A), \rho_c(A)\}$ and the number   $\rho(A)$ does not change under elementary transformations. If $R$ is a right B\'ezout domain, then $\rho(A)=\rho_r(A)=\rho_c(A)$ for any $A$ over $R$. A  matrix $A\in R^{n\times n}$  is called {\it full} if $\rho(A)=n$ (see \cite[p.\,248]{Cohn}), i.e. $R^{n\times n}A R^{n\times n}=R^{n\times n}$.

In the sequel, we use the following result.

\begin{proposition}\label{prop2.1}
  The following statements hold:
  \begin{itemize}
    \item[i)] \cite[Corollary 3.7]{6KhLaNi} Let $R=S^{2\times2}$ where $S$ is any ring and $A=\left(\begin{smallmatrix} a & b \\ c & d                    \end{smallmatrix}\right)\in R$. If $b\in  U(S)$ or $c\in  U(S),$ then $A$ is clean element of $S^{2\times2}$.
    \item[ii)] \cite[Theorem 2]{7Du} A commutative B\'ezout domain $R$ is an elementary divisor ring if and only if for any nonsingular full matrix $A\in R^{2\times2}$ right (left) principal ideal $AR^{2\times2}$ ($R^{2\times2}A$) contains nontrivial idempotent.
    \item[iii)] \cite[Proposition 1.8]{4Ni} Every clean element of ring is an exchange element.
  \end{itemize}
\end{proposition}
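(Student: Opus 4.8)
The plan is to establish the three parts separately, since each sits in a different circle of ideas; part (ii) will carry essentially all of the difficulty, while (i) and (iii) are constructive.

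For (i), I would argue constructively. Assume $b \in U(S)$, the case $c \in U(S)$ being symmetric. The key observation is that $S^{2\times2}$ carries a free one-parameter family of idempotents $E_x = \left(\begin{smallmatrix} 1 & 0 \\ x & 0\end{smallmatrix}\right)$, $x \in S$ (indeed $E_x^2 = E_x$ for every $x$). I would subtract such an idempotent and arrange for the remainder to be a unit: $A - E_x = \left(\begin{smallmatrix} a-1 & b \\ c-x & d\end{smallmatrix}\right)$. Since $b \in U(S)$, left-multiplying by $\left(\begin{smallmatrix} 1 & 0 \\ -db^{-1} & 1\end{smallmatrix}\right)$ and swapping the two rows brings $A - E_x$ to lower-triangular form with diagonal entries $(c-x) - db^{-1}(a-1)$ and $b$; hence $A - E_x \in GL_2(S)$ exactly when the noncommutative Schur complement $(c-x) - db^{-1}(a-1)$ is a unit. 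The free parameter now does the work: choosing $x = c - db^{-1}(a-1) - 1$ makes this complement equal to $1$, so $A = E_x + (A - E_x)$ is a clean decomposition.

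For (iii), I would invoke Nicholson's criterion \cite{4Ni}: $a$ is an exchange element iff there is an idempotent $g \in aR$ with $1 - g \in (1-a)R$. Starting from a clean decomposition $a = e + u$ with $e^2 = e$ and $u \in U(R)$, the unit $u$ is precisely what transports the idempotent $e$ into the principal ideal $aR$. Concretely, with $p = u^{-1}(1-e)u$ one has $1 - p = u^{-1}eu$ and hence $u^{-1}a = 1 + (1-p)u^{-1}$; a short manipulation of this identity produces the witness idempotent $g$, and its two membership conditions follow directly from $e^2 = e$ and the invertibility of $u$. This is the lightest of the three parts and is the standard lifting argument.

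Part (ii) is where the real work lies, and I would first translate it into module language. Viewing $R^{2\times2} = \mathrm{End}(R^2)$ acting on columns, the right ideal $AR^{2\times2}$ consists exactly of the matrices whose image lies in $\mathrm{Im}(A)$, while a nontrivial idempotent of $R^{2\times2}$ is a projection onto a rank-one free direct summand of $R^2$. Thus $AR^{2\times2}$ contains a nontrivial idempotent if and only if $\mathrm{Im}(A)$ contains a unimodular vector, i.e. there is $w \in R^2$ with the entries of $Aw$ coprime. With this dictionary the forward direction is short: if $R$ is an elementary divisor ring, diagonalize $A = P^{-1}\,\mathrm{diag}(d_1,d_2)\,Q^{-1}$ with $d_1 \mid d_2$; fullness of $A$ forces the two-sided ideal it generates to be all of $R^{2\times2}$, hence $(d_1) = R$ and $d_1 \in U(R)$, so $\mathrm{diag}(d_1,d_2)R^{2\times2}$ visibly contains the idempotent $\left(\begin{smallmatrix} 1 & 0 \\ * & 0\end{smallmatrix}\right)$, which conjugating by $P^{-1}$ carries into $AR^{2\times2}$. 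The converse is the main obstacle: here I would feed the hypothesis into the classical Kaplansky (Gillman--Henriksen) characterization, namely that a commutative B\'ezout domain is an elementary divisor ring iff for all $a,b,c$ with $(a,b,c)=R$ there exist $p,q$ with $(pa, pb+qc)=R$. The task is to show that the blanket existence of a unimodular vector in $\mathrm{Im}(A)$, for every nonsingular full $A$, delivers exactly such $p,q$ — by specializing to matrices of the shape $\left(\begin{smallmatrix} a & b \\ 0 & c\end{smallmatrix}\right)$ and reading off the coprimality relation from the coordinates of $Aw$ — and, conversely, that Kaplansky's condition allows an arbitrary nonsingular matrix to be reduced to diagonal form. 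Verifying that these two coprimality conditions are genuinely interchangeable, and organizing the passage from the $2\times2$ criterion to diagonal reduction of all matrices, is the delicate bookkeeping; this is exactly the content of \cite{7Du}, on which I would lean to close the equivalence.
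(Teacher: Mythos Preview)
The paper does not prove Proposition~\ref{prop2.1} at all: it is stated purely as a compendium of three cited results (Khurana--Lam--Nielsen, Dubrovin, Nicholson), with no argument given in the text. So there is no ``paper's own proof'' to compare against; your proposal goes strictly beyond what the paper does.

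That said, your sketches are sound. For (i), the one-parameter idempotent family $E_x=\left(\begin{smallmatrix}1&0\\x&0\end{smallmatrix}\right)$ together with the Schur-complement choice $x=c-db^{-1}(a-1)-1$ is exactly the mechanism behind the cited result, and your row-reduction check that $A-E_x\in GL_2(S)$ is correct. For (iii), your conjugated idempotent $p=u^{-1}(1-e)u$ is indeed the witness: from $(1-e)a=(1-e)u$ one gets $p=u^{-1}(1-e)\cdot a\in Ra$, and from $e(1-a)=-eu$ one gets $1-p=u^{-1}eu=(-u^{-1}e)(1-a)\in R(1-a)$; you might make these two memberships explicit rather than leaving them as ``a short manipulation''. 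For (ii), your module-theoretic translation (nontrivial idempotent in $AR^{2\times2}$ $\Leftrightarrow$ $\mathrm{Im}(A)$ contains a unimodular vector) and the reduction to the Gillman--Henriksen coprimality condition via matrices $\left(\begin{smallmatrix}a&b\\0&c\end{smallmatrix}\right)$ is the right route and matches Dubrovin's argument; you correctly flag that closing the equivalence requires the bookkeeping of \cite{7Du}, which is precisely what the paper is citing.
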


\section{Clear element}

Different classes of rings have generalized clean rings by adding one or more adjectives such as "almost", "semi", "uniquely" and i.e. to "clean". We propose another generalization of clean elements namely clear elements. Describing full matrices over commutative elementary divisor ring based on this concept is able.

  Element $a$ of ring  $R$ is \textit{clear} if $a=r+u$ where $r$ is unit-regular element and $u\in U(R)$. The ring $R$ is \textit{clear} if every  its  element is such. An obvious example of clear elements are 0 and 1.

  Let $a\in R$ is clear element such that $a=r+u$, where  $r$ is unit-regular and $u\in U(R)$. If $r\ne 0$ and $r\notin U(R),$ we say that clear element $a$ is nontrivial. For example, if $R$ is 2-good ring (any element is the sum of two units), then any $a\in R$ has trivial representation $a=u+v$, where $u,v\in U(R)$, though it does not exclude nontrivial representation of elements of $R$.

Since idempotent of ring is obviously unit-regular element, we get the next result.

\begin{proposition}\label{newprop1}
  Any clean element is clear. Clean ring is clear.
\end{proposition}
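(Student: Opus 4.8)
The plan is to exploit the only formal difference between the two notions: a clean decomposition uses an idempotent summand, a clear decomposition uses a unit-regular summand, and every idempotent is unit-regular. First I would unwind the definitions, writing a clean element as $a = e + u$ with $e^{2} = e$ and $u \in U(R)$. Then I would observe that the identity $e = e \cdot 1 \cdot e$ with $1 \in U(R)$ shows, directly from Ehrlich's definition of unit-regularity recalled in the introduction, that $e$ is a unit-regular element. Hence the very same expression $a = e + u$ now exhibits $a$ as the sum of a unit-regular element and a unit, so $a$ is clear.

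For the ring statement I would simply apply the element-wise implication uniformly: if every element of $R$ is clean, then by the above every element of $R$ is clear, so $R$ is a clear ring. There is essentially no obstacle here — the content is the elementary remark, already flagged in the sentence preceding the proposition, that idempotents are unit-regular (take the witnessing unit to be $1$); everything else is a one-line substitution in the defining equation. If anything deserves a comment, it is only that this inclusion of classes need not be strict and need not preserve the "nontrivial" qualifier, since a clean representation with $e \neq 0$ and $e$ non-unit is automatically a nontrivial clear representation, but that refinement is not needed for the statement as given.
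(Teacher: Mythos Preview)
Your argument is correct and matches the paper's own justification exactly: the paper does not even provide a separate proof, merely prefacing the proposition with the remark that an idempotent is obviously unit-regular, which is precisely the observation $e = e\cdot 1\cdot e$ you spell out. Nothing further is needed.
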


Moreover, we have the following proposition.

\begin{proposition}\label{newprop2}
  Unit-regular element is clear.
\end{proposition}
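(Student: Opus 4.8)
The statement to prove is: \emph{Unit-regular element is clear.}

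The plan is to take a unit-regular element $a \in R$ and exhibit it directly in the form $a = r + u$ with $r$ unit-regular and $u \in U(R)$. The natural first move is to use the definition: $a = aua$ for some $u \in U(R)$, equivalently $a = eu$ for an idempotent $e$ and a unit $u$ (this is exactly the characterization recalled in the introduction: a unit-regular element is an idempotent times a unit). So I would write $a = eu$ with $e^2 = e$ and $u \in U(R)$, and then try to split off a unit additively.

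The key idea is that the idempotent factor $e$ can itself be written as a sum involving a unit-regular piece and a unit. Consider the decomposition $a = eu = (e-1)u + u$. Here $u \in U(R)$ is the unit summand. It remains to check that $(e-1)u$ is unit-regular. Since $1-e$ is an idempotent, it is unit-regular, and I would like to conclude that $(1-e)u$ — hence $(e-1)u$, since negation preserves unit-regularity ($-1$ is a unit) — is unit-regular as well. This is the crux: the product of an idempotent and a unit is unit-regular. Indeed, if $f = 1-e$ is idempotent then $fu$ satisfies $(fu)(u^{-1}f)(fu) = f f f u = f u = fu$, and $u^{-1}f$ is a unit times idempotent... but wait, $u^{-1}f$ need not be a unit. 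The cleaner route: $fu = f u$, and $(fu) v (fu) = fu$ with $v = u^{-1}$ requires $f u u^{-1} f u = f f u = f u$, which holds; but $v$ must be a unit, and $u^{-1}$ is a unit, so $fu$ is unit-regular by the very definition ($fu = (fu)(u^{-1})(fu)$). So $(e-1)u = -(1-e)u$ is unit-regular, being $(-u)$ with $(1-e)$ factored appropriately — more directly, $(e-1)u = \bigl((e-1)u\bigr) u^{-1} \bigl((e-1)u\bigr)$ since $(e-1)u u^{-1}(e-1)u = (e-1)^2 u = (e-1)u$.

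Therefore $a = eu = (e-1)u + u$ expresses $a$ as the sum of the unit-regular element $r = (e-1)u$ and the unit $u$, which is exactly the assertion that $a$ is clear. I expect the only subtle point to be the bookkeeping that a product of an idempotent with a unit is unit-regular (handled above by verifying the witness equation directly), and noting that one does not need the idempotents to be central here, so the argument works over an arbitrary associative ring $R$ with identity. No nontriviality is claimed, so I would not worry about whether $r = 0$ or $r \in U(R)$; indeed when $a$ is itself a unit one gets the trivial representation, which is consistent with Proposition~\ref{newprop1} and the remarks preceding it.
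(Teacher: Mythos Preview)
Your argument is correct and follows essentially the same route as the paper: write $a$ as idempotent-times-unit and split off the unit additively. The paper starts from $aua=a$, notes that $au$ is idempotent, sets $au=1-e$, and obtains $a=u^{-1}-eu^{-1}$ with $-eu^{-1}$ unit-regular; your $a=eu=(e-1)u+u$ is the same decomposition in different notation.

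One small slip to fix: your ``more directly'' line asserts $(e-1)u\,u^{-1}(e-1)u=(e-1)^2u=(e-1)u$, but in fact $(e-1)^2=1-e=-(e-1)$, so the correct unit witness for $(e-1)u$ is $-u^{-1}$, not $u^{-1}$. This does no damage to the proof, since you had already (correctly) shown that $(1-e)u$ is unit-regular with witness $u^{-1}$ and observed that multiplication by the unit $-1$ preserves unit-regularity; simply delete or correct the ``more directly'' sentence.
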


\begin{proof}
  Let $aua=a$ for some $u\in U(R)$. Since $au$ is idempotent, so $au=1-e$ for some idempotent $e\in R$. Then $a=u^{-1}-eu^{-1}$. Since $-eu^{-1}$   is unit-regular,  $a$ is clear element.
\end{proof}

We point out that these proposition are not valid in reverse order. For example, matrix $\left(\begin{smallmatrix} 12 & 5 \\ 0 & 0 \end{smallmatrix}\right)\in \mathbb{Z}^{2\times2}$ is clear element, but as noted earlier, it is not clean. Or, to take another example, ring $\mathbb{Z}_4$ is clear ring, but it is not unit-regular.

The description of idempotent or unit-regular matrices in the rings of matrices over rings is enough actual task. For example:

\begin{proposition}\cite[Proposition 2.1]{new1Sa}\label{newprop3}
Let $R=S^{2\times2}$ where $S$ is any ring and let $A=\left(\begin{smallmatrix} a & 0 \\ b & 0 \end{smallmatrix}\right)\in R$. A matrix $A$ is unit-regular in $R$ if and only if exists an idempotent $e\in S$ and an unimodular column $\left(\begin{smallmatrix} a_1 \\ b_1 \end{smallmatrix}\right)\in S^{1\times 2}$ such that  $\left(\begin{smallmatrix} a\\ b \end{smallmatrix}\right)=\left(\begin{smallmatrix} a_1 \\ b_1 \end{smallmatrix}\right)e$.
\end{proposition}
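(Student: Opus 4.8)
The plan is to reduce everything to the elementary fact recalled in the introduction: an element of a ring is unit-regular precisely when it is a unit times an idempotent. Over $R=S^{2\times2}$ this reads: $A$ is unit-regular in $R$ if and only if $A=PE$ with $P\in GL_2(S)$ and $E=E^2\in S^{2\times2}$. Since the matrix in the statement has the special shape $A=\left(\begin{smallmatrix} a & 0 \\ b & 0\end{smallmatrix}\right)$, its second column is zero, and this will cut the factorization $A=PE$ down to exactly the data $\bigl(e,\left(\begin{smallmatrix} a_1 \\ b_1\end{smallmatrix}\right)\bigr)$ appearing in the proposition. I read ``unimodular column'' here in the strong sense ``a column of some invertible matrix over $S$''; this reading is what makes the statement hold over an arbitrary ring, and I will return to it.

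For the ``only if'' direction I would write $A=PE$ with $P$ invertible and $E$ idempotent. Comparing second columns and using that $P$ is invertible forces the second column of $E$ to vanish, so $E=\left(\begin{smallmatrix} e_1 & 0 \\ e_2 & 0\end{smallmatrix}\right)$, and $E^2=E$ gives $e_1^2=e_1$ and $e_2e_1=e_2$. Then I set $e:=e_1$ and $\left(\begin{smallmatrix} a_1 \\ b_1\end{smallmatrix}\right):=P\left(\begin{smallmatrix} 1 \\ e_2\end{smallmatrix}\right)$. This column is the first column of the invertible matrix $P\left(\begin{smallmatrix} 1 & 0 \\ e_2 & 1\end{smallmatrix}\right)$, hence unimodular; and $\left(\begin{smallmatrix} a_1 \\ b_1\end{smallmatrix}\right)e = P\left(\begin{smallmatrix} 1 \\ e_2\end{smallmatrix}\right)e = P\left(\begin{smallmatrix} e_1 \\ e_2 e_1\end{smallmatrix}\right) = P\left(\begin{smallmatrix} e_1 \\ e_2\end{smallmatrix}\right)$, which is precisely the first column of $PE=A$, namely $\left(\begin{smallmatrix} a \\ b\end{smallmatrix}\right)$. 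That is the required representation.

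For the ``if'' direction, suppose $\left(\begin{smallmatrix} a \\ b\end{smallmatrix}\right)=\left(\begin{smallmatrix} a_1 \\ b_1\end{smallmatrix}\right)e$ with $e=e^2\in S$ and $\left(\begin{smallmatrix} a_1 \\ b_1\end{smallmatrix}\right)$ the first column of some $P\in GL_2(S)$. Put $E_0:=\left(\begin{smallmatrix} e & 0 \\ 0 & 0\end{smallmatrix}\right)$, an idempotent of $S^{2\times2}$. A one-line multiplication gives $PE_0=\left(\begin{smallmatrix} a_1e & 0 \\ b_1e & 0\end{smallmatrix}\right)=\left(\begin{smallmatrix} a & 0 \\ b & 0\end{smallmatrix}\right)=A$, so $A$ is a unit times an idempotent and therefore unit-regular in $R$. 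Note that the argument uses nothing about commutativity of $S$.

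The only delicate point --- and what I expect to be the real content of the ``if'' direction --- is the precise meaning of ``unimodular column''. With the reading above (a column of some invertible matrix) both implications are the two short computations just sketched, and the ``only if'' direction in fact hands back a unimodular column of exactly this kind for free. If instead one only assumes $\left(\begin{smallmatrix} a_1 \\ b_1\end{smallmatrix}\right)$ is left-invertible, i.e.\ $sa_1+tb_1=1$ for some $s,t\in S$, then the reverse implication becomes genuinely problematic: to produce a unit $X$ with $AXA=A$ one is forced to realize the row $(s,t)$ as the first row of an invertible $2\times2$ matrix, and over an arbitrary ring a right-invertible $1\times2$ row need not complete in this way. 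So the first thing I would do is fix ``unimodular'' to mean ``completable to an invertible matrix''; after that the proof is essentially the bookkeeping above.
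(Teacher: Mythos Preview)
The paper does not supply its own proof of this proposition: it is quoted verbatim from \cite{new1Sa} and only followed by the remark ``Recall that column $\left(\begin{smallmatrix} a\\ b\end{smallmatrix}\right)$ is unimodular if $Ra+Rb=R$''. So there is nothing to compare against, and I can only comment on your argument itself.

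Your proof is correct under the reading of ``unimodular'' as ``completable to an invertible $2\times2$ matrix''. Both directions are clean: in the ``only if'' part the trick of replacing $E=\left(\begin{smallmatrix} e_1 & 0\\ e_2 & 0\end{smallmatrix}\right)$ by the first column of $P\left(\begin{smallmatrix} 1 & 0\\ e_2 & 1\end{smallmatrix}\right)$ is exactly what is needed, and it even shows that the unimodular column one obtains is automatically completable; the ``if'' part is immediate from $A=P\left(\begin{smallmatrix} e & 0\\ 0 & 0\end{smallmatrix}\right)$.

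You are also right to flag the definitional issue, and in fact the paper adopts precisely the weaker meaning $Sa_1+Sb_1=S$. With that reading the ``if'' direction does become delicate over a genuinely noncommutative $S$: from $sa_1+tb_1=1$ one gets $AUA=A$ for any $U$ with first row $(s,t)$, but producing an \emph{invertible} such $U$ amounts to completing the right-unimodular row $(s,t)$, which is not automatic. One remark you could add for context: when $S$ is commutative the two notions coincide for length-two columns, since $sa_1+tb_1=1$ makes $\left(\begin{smallmatrix} a_1 & -t\\ b_1 & s\end{smallmatrix}\right)$ invertible; so the subtlety is purely a noncommutative phenomenon.
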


Recall that column $\left(\begin{smallmatrix} a\\ b \end{smallmatrix}\right)$ is unimodular if $R a+R b=R$. Also note that this condition is obviously equivalent to $R a+R b=R e$ where $e^2=e$, $a=a_1e$, $b=b_1e$ and $R a_1+R b_1=R$.

\begin{proposition}\label{newprop4}
  Every homorphic image of a clear rings is clear.
\end{proposition}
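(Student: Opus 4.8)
The plan is to push a clear decomposition forward along the quotient map. Let $\varphi\colon R\to S$ be a surjective ring homomorphism with $R$ clear, and fix an arbitrary element $s\in S$. First I would choose a preimage $a\in R$ with $\varphi(a)=s$, and invoke the hypothesis that $R$ is clear to write $a=r+u$ with $r$ unit-regular and $u\in U(R)$. Applying $\varphi$ then gives $s=\varphi(a)=\varphi(r)+\varphi(u)$, so the whole argument reduces to checking that $\varphi(r)$ and $\varphi(u)$ are of the required types.

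For the unit part, since $\varphi$ preserves the identity, $\varphi(u)\varphi(u^{-1})=\varphi(uu^{-1})=\varphi(1)=1$ and likewise $\varphi(u^{-1})\varphi(u)=1$, so $\varphi(u)\in U(S)$. For the unit-regular part, write $rvr=r$ for some $v\in U(R)$; then $\varphi(r)\varphi(v)\varphi(r)=\varphi(rvr)=\varphi(r)$, and by the previous observation $\varphi(v)\in U(S)$, so $\varphi(r)$ is unit-regular in $S$. Hence $s=\varphi(r)+\varphi(u)$ is a clear decomposition in $S$, and since $s$ was arbitrary, $S$ is clear.

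I do not expect a genuine obstacle here: the only facts used are that a unital ring homomorphism sends units to units and unit-regular elements to unit-regular elements, both of which are immediate from $\varphi(1)=1$. One may equivalently state the proposition as ``$R/I$ is clear whenever $R$ is,'' with $\varphi$ the canonical projection; this is the form in which it will be applied later, and the proof is verbatim the same.
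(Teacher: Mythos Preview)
Your proof is correct and follows essentially the same approach as the paper: push a clear decomposition through the homomorphism, using that units and unit-regular elements map to units and unit-regular elements. The only cosmetic difference is that the paper verifies the latter via the characterization ``unit-regular $=$ idempotent times unit'' (both factors are preserved by a ring map), whereas you apply $\varphi$ directly to the equation $rvr=r$; your version is in fact more detailed than the paper's sketch.
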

\begin{proof}
Every unit-regular element is an idempotent times a unit element and since multiplication is preserved by every ring homomorphism. The homomorphic image of an idempotent and unit element are suitable idempotent and unit element of its ring.
\end{proof}

Since addition is also preserved by every ring homomorphism, the next result follows.

\begin{proposition}\label{newprop5}
Every direct product of clear rings is clear.
\end{proposition}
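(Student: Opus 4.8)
The plan is to reduce the claim about a direct product $R = \prod_{i \in I} R_i$ to a coordinatewise statement, exactly as in the proof of Proposition~\ref{newprop4}. First I would recall the elementary facts that make this work: an element $(a_i)_{i \in I} \in \prod_i R_i$ is a unit if and only if each $a_i \in U(R_i)$, and it is idempotent if and only if each $a_i$ is idempotent in $R_i$. From the characterization recalled in the introduction (an element is unit-regular iff it is an idempotent times a unit), it follows that $(a_i)_{i \in I}$ is unit-regular in $\prod_i R_i$ precisely when each $a_i$ is unit-regular in $R_i$: writing $a_i = e_i v_i$ with $e_i^2 = e_i$ and $v_i \in U(R_i)$, the elements $e = (e_i)$ and $v = (v_i)$ are respectively an idempotent and a unit of the product whose product is $(a_i)$.

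Next I would take an arbitrary element $a = (a_i)_{i \in I} \in \prod_i R_i$. Since each $R_i$ is clear, for every $i$ we can write $a_i = r_i + u_i$ with $r_i$ unit-regular in $R_i$ and $u_i \in U(R_i)$. Set $r = (r_i)_{i \in I}$ and $u = (u_i)_{i \in I}$. By the observations in the previous paragraph, $r$ is unit-regular in $\prod_i R_i$ and $u \in U\bigl(\prod_i R_i\bigr)$, and because addition in the product is coordinatewise, $a = r + u$. Hence $a$ is clear, and since $a$ was arbitrary, $\prod_i R_i$ is a clear ring.

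The argument is entirely routine; there is no real obstacle, which is why the paper states it as an immediate consequence of the homomorphic-image proposition together with the coordinatewise description of units and unit-regular elements. The only point that needs the mild care already signalled in the remark preceding Proposition~\ref{newprop5} is that both the additive and the multiplicative structure of the product are computed componentwise, so that a coordinatewise clear decomposition assembles into a genuine clear decomposition in $\prod_i R_i$; once this is noted the proof is a single line.
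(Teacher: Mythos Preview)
Your proof is correct and follows essentially the same approach as the paper: both argue that units and idempotents (hence unit-regular elements, via the ``idempotent times unit'' characterization) are determined componentwise in a direct product, and that since addition is also componentwise, a coordinatewise clear decomposition assembles into a clear decomposition in $\prod_i R_i$. You have simply spelled out in full the ``simple computation'' to which the paper alludes.
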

\begin{proof}
It is known that multiplication in direct product of rings is defined componentwise, so element in the direct product of rings is unit (resp. idempotent) of that ring if and only if the entry in each of its components is unit (resp. idempotent) of this ring. Since addition in a direct product of rings is also defined componentwise, the result follows from a simple computation.
\end{proof}

\begin{lemma}\label{newlemma6}
  Element $a\in R$ is clear if and only if $ua$ and $au$ are clean elements for some $u\in U(R)$.
\end{lemma}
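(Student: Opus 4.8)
The plan is to unwind the definition of "clear" in terms of "clean" by multiplying through by a unit. Suppose first that $a$ is clear, so $a = r + v$ with $r$ unit-regular and $v \in U(R)$. Since $r$ is unit-regular, $r = r w r$ for some $w \in U(R)$, and then $wr$ is an idempotent; equivalently $rw$ is an idempotent. The idea is that $w a = wr + wv$, and here $wr$ is an idempotent and $wv$ is a unit, so $wa$ is clean with $u = w$. Symmetrically, using that $rw$ is idempotent, $a w = rw + vw$ exhibits $aw$ as clean. So taking $u = w$ works for both $ua$ and $au$ simultaneously, which is exactly what the statement asks (the same $u$ on both sides).

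For the converse, suppose $ua$ and $au$ are clean for some $u \in U(R)$; I actually only need one of them, say $ua = e + v$ with $e^2 = e$ and $v \in U(R)$. Then $a = u^{-1}e + u^{-1}v$. The element $u^{-1}v$ is a unit, and $u^{-1}e$ is a unit times an idempotent, hence unit-regular (recall from the excerpt that $x$ is unit-regular iff $x$ is a unit times an idempotent iff $x$ is an idempotent times a unit). Therefore $a$ is the sum of a unit-regular element and a unit, i.e. $a$ is clear. The case where $au$ is clean is handled the same way from the right: $au = e + v$ gives $a = eu^{-1} + vu^{-1}$ with $eu^{-1}$ unit-regular and $vu^{-1}$ a unit.

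The one point requiring a little care — and the closest thing to an obstacle — is making sure the \emph{same} unit $u$ serves for both $ua$ and $au$ in the forward direction, rather than getting two different units. This is handled by the observation that if $rwr = r$ then both $wr$ and $rw$ are idempotents (since $(wr)(wr) = w(rwr) = wr$ and $(rw)(rw) = (rwr)w = rw$), so the single unit $w$ works on both sides. I would state this explicitly. A second, purely bookkeeping point is the characterization of unit-regular elements as (idempotent)$\times$(unit) and (unit)$\times$(idempotent), which is quoted in the introduction of the excerpt and can be cited there; it is what lets the converse go through. No nontrivial computation is involved beyond these one-line idempotent checks.
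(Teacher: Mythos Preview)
Your argument is correct and matches the paper's proof essentially line for line: both directions use the same unit $u$ with $rur=r$ to pass between the clear decomposition $a=r+v$ and the clean decompositions $ua=ur+uv$, $au=ru+vu$. The only cosmetic difference is that for the converse the paper verifies unit-regularity of $u^{-1}e$ by the one-line check $(u^{-1}e)u(u^{-1}e)=u^{-1}e$, whereas you invoke the (equivalent) characterization ``unit times idempotent is unit-regular'' from the introduction.
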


\begin{proof}
    Let $a$ is clear element, i.e. $a=r+v$, where $r$ is unit-regular element and $v\in U(R)$. Since $rur=r$ for some $u\in U(R)$, then $ua=ur+uv$ where $(ur)^2=ur$ and $uv\in U(R)$, i.e. $ua$ is clean element. Similarly $au =ru+vu$ where  $(ru)^2=ru$ and $vu\in U(R)$.

    Let for $a\in R$ there exists unit element $u\in  U(R)$ such that $ua$ and $au$ are clean elements. Let $ua =e+v$ where $e^2=e$ and $v\in U(R)$. Then $a=u^{-1}e+u^{-1}v$. Since $(u^{-1}e)u(u^{-1}e)=u^{-1}e$, we have that $u^{-1}e$ is an unit-regular element and $u^{-1}v\in  U(R)$, i.e. $a$ is clear.
    The same is in the case $au$.
\end{proof}

In this context for describing clear rings let's give the following definition.

  Ring $R$ is said to have unit-regular stable range 1 if for any element $a,b\in R$ with $aR+bR=R$ there exists some unit-regular element $r$ of $R$ such that $a+br$ is an unit element of $R$.

\begin{lemma}\label{newtheor7}
    Ring of unit-regular stable range 1 is clear.
\end{lemma}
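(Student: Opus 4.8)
The plan is to prove the lemma directly from the definition by applying the unit-regular stable range $1$ condition to a single well-chosen pair of elements. Fix an arbitrary $a\in R$; the goal is to exhibit a unit-regular element $r$ and a unit $u$ with $a=r+u$.

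The key observation is that $-1\in U(R)$, so $(-1)R=R$ and therefore the pair $(a,-1)$ automatically satisfies $aR+(-1)R=R$. Applying the defining property of unit-regular stable range $1$ to this pair produces a unit-regular element $r\in R$ such that $a+(-1)r=a-r$ is a unit of $R$, say $a-r=u\in U(R)$. Rearranging gives $a=r+u$ with $r$ unit-regular and $u\in U(R)$, which is exactly the assertion that $a$ is clear. As $a$ was arbitrary, $R$ is a clear ring.

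There is essentially no obstacle here beyond spotting the correct specialization of the pair $(a,b)$; the entire content of the statement is packaged into the definition of unit-regular stable range $1$. (One could equally use the pair $(a,1)$, obtaining a unit-regular $r$ and a unit $u$ with $a+r=u$, hence $a=u+(-r)$; note $-r$ is again unit-regular, since $rvr=r$ with $v\in U(R)$ forces $(-r)(-v)(-r)=-r$ and $-v\in U(R)$.) I would simply present the one-line argument with the pair $(a,-1)$ and remark that this shows every element is clear.
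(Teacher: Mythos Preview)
Your proof is correct and is essentially identical to the paper's: both apply the unit-regular stable range $1$ condition to the pair $(a,-1)$, obtaining $a+(-1)r=u\in U(R)$ and hence $a=r+u$. The parenthetical alternative with the pair $(a,1)$ is a minor variant not in the paper, but the main argument matches exactly.
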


\begin{proof}
Let $R$ is ring of unit-regular stable range 1 and $a\in R$. Then $aR+(-1)R=R$ and we have that $a+(-1)r=u\in U(R)$ for some unit-regular element $r\in R$, i.e. $a=r+u$.
\end{proof}

For the following results recall that element $a\in R$ is \textit{2-clean} if $a=e+u+v$ for some idempotent $e$ and unit elements $u,v\in U(R)$ \cite{new3Gov}.

\begin{proposition}\label{newprop8}
For any commutative ring clear element is 2-clean. Any time clear ring is 2-clean ring.
\end{proposition}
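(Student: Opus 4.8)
The plan is to reduce the statement to a single fact already recalled in the introduction: over a ring whose idempotents are central, every unit-regular element is clean (Nicholson, \cite[Proposition~1.8]{4Ni}). A commutative ring trivially has central idempotents, so this applies. Granting it, suppose $a\in R$ is clear, say $a=r+u$ with $r$ unit-regular and $u\in U(R)$. Writing $r=e+v$ with $e^2=e$ and $v\in U(R)$ gives $a=e+v+u$, a sum of an idempotent and two units, i.e.\ $a$ is $2$-clean. The second assertion is then immediate: if every element of $R$ is clear, the above shows every element is $2$-clean, so $R$ is a $2$-clean ring.

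For completeness I would also include a direct verification of the ``unit-regular $\Rightarrow$ clean'' step in the commutative case, since it is the only real content. Recall that a unit-regular $r$ can be written $r=ew$ with $e^2=e$ and $w\in U(R)$, and in the commutative setting $e$ is central. A short computation, using that $e$ commutes with everything, shows that $ew^{-1}-(1-e)$ is a two-sided inverse of $r-(1-e)=ew-(1-e)$, so $r-(1-e)\in U(R)$. Since $1-e$ is idempotent, $r=(1-e)+\bigl(r-(1-e)\bigr)$ exhibits $r$ as clean, and we finish as above. (Note that $r-e=e(w-1)$ need not be a unit, which is why one subtracts $1-e$ rather than $e$.)

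There is essentially no obstacle here beyond bookkeeping; the one point worth flagging is precisely where commutativity enters — only through the centrality of idempotents, which is what makes the displayed inverse work and makes unit-regular elements clean. The hypothesis is genuinely needed: the matrix $\left(\begin{smallmatrix} 12 & 5 \\ 0 & 0 \end{smallmatrix}\right)\in\mathbb{Z}^{2\times2}$ is clear but, as noted earlier, not even clean, so a clear element of a noncommutative ring need not be $2$-clean.
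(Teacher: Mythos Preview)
Your proof is correct and follows the same line as the paper's: write $a=r+u$, invoke that unit-regular elements in commutative rings are clean to get $r=e+v$, and conclude $a=e+v+u$. The only difference is cosmetic --- the paper cites Wang--Chen \cite{new4WaChe} for the ``unit-regular $\Rightarrow$ clean'' step while you cite Nicholson \cite[Proposition~1.8]{4Ni} and supply a direct verification, which is a nice addition.

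However, your closing remark is wrong. The matrix $\left(\begin{smallmatrix} 12 & 5 \\ 0 & 0 \end{smallmatrix}\right)\in\mathbb{Z}^{2\times2}$ is indeed clear and not clean, but ``not clean'' does not imply ``not $2$-clean'' --- the inference you draw is a non sequitur. In fact, as the paper notes immediately after this proposition (citing \cite[Lemma~3]{new4WaChe}), \emph{every} $2\times2$ matrix over any ring is $2$-clean, so this example cannot witness failure of the implication ``clear $\Rightarrow$ $2$-clean'' in the noncommutative setting. Drop that sentence, or replace it with the correct observation that commutativity is used only to guarantee central idempotents.
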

\begin{proof}
Let $a\in R$ is clear element, i.e. $a=r+u$ for some unit-regular element $r$ and $u\in U(R)$. Since every unit-regular element of commutative ring is clean~\cite{new4WaChe} we have that $r=e+v$ for some idempotent $e\in R$ and $v\in U(R)$. Then $a=e+u+v$.
\end{proof}

In addition, we shall notice, that for any ring $2\times 2$ and $3\times 3$ matrices are 2-clean \cite[Lemma~3]{new4WaChe}.

Since local ring is clean by Proposition~\ref{newprop1} we also have the following result.

\begin{proposition}\label{newprop9}
Local ring is clear.
\end{proposition}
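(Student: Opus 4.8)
The plan is to deduce this immediately from Proposition~\ref{newprop1}, which asserts that every clean ring is clear, once we know that a local ring is clean. The fact that local rings are clean is classical and is in fact already recorded among the standard examples of clean rings in the introduction, so the only thing worth spelling out is a short argument for why a local ring is clean.

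Concretely, I would first recall the elementary characterization of a local ring $R$: the non-units of $R$ form a two-sided ideal (namely $J(R)$), equivalently, for every $a\in R$ either $a\in U(R)$ or $1-a\in U(R)$. Now take an arbitrary $a\in R$. If $a\in U(R)$, then $a=0+a$ exhibits $a$ as the sum of the idempotent $0$ and the unit $a$. If $a\notin U(R)$, then $1-a\in U(R)$, and $a=1+(a-1)$ exhibits $a$ as the sum of the idempotent $1$ and the unit $-(1-a)$. In either case $a$ is clean, so $R$ is a clean ring, and Proposition~\ref{newprop1} then gives at once that $R$ is clear.

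I do not expect any genuine obstacle here: the whole content is the two-case split above together with an appeal to the already-established Proposition~\ref{newprop1}. The only point requiring a moment's care is invoking the correct characterization of local rings (every element or its complement to $1$ is a unit), after which the clean decomposition of an arbitrary element is forced.
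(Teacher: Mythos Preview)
Your proposal is correct and matches the paper's approach exactly: the paper simply notes that local rings are clean and invokes Proposition~\ref{newprop1}. You add the standard two-case argument for why a local ring is clean, which the paper takes as known from the introduction, but the route is the same.
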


There are not nontrivial idempotents in the local ring. Let's describe clear ring $R$ which has not nontrivial idempotent (domain, for example). Recall that element $a\in R$ is \textit{2-good} if $a$ is the sum of two unit elements.

\begin{proposition}\label{newprop109}
The following are equivalent for any ring $R$:
\begin{enumerate}
  \item $R$ is a clear and has no nontrivial idempotents;
  \item for any element $a\in R:$
  \begin{enumerate}
  \item $a\in U(R)$ or
  \item $a$ is 2-good element.
\end{enumerate}

\end{enumerate}
\end{proposition}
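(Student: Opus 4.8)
The plan is to isolate one observation about rings without nontrivial idempotents and then read off both implications from it. The key lemma is: if the only idempotents of $R$ are $0$ and $1$, then $r\in R$ is unit-regular precisely when $r=0$ or $r\in U(R)$. Indeed, $0=0\cdot 1\cdot 0$ and $u=u\,u^{-1}u$ for every $u\in U(R)$, so both kinds of element are unit-regular; conversely, if $r=rvr$ with $v\in U(R)$, then $(rv)^2=(rvr)v=rv$, so $rv$ is idempotent and hence $rv\in\{0,1\}$, and right multiplication by $v^{-1}$ gives $r=0$ or $r=v^{-1}\in U(R)$.

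Granting this, the implication $(1)\Rightarrow(2)$ is short. Given $a\in R$, pick a clear decomposition $a=r+u$ with $r$ unit-regular and $u\in U(R)$. Since $R$ has no nontrivial idempotents, the lemma forces $r=0$ or $r\in U(R)$: in the first case $a=u\in U(R)$, giving (2a); in the second $a=r+u$ exhibits $a$ as a sum of two units, so $a$ is $2$-good, giving (2b).

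For $(2)\Rightarrow(1)$, clearness is essentially automatic: if $a\in U(R)$ then $a=0+a$ is a clear decomposition with unit-regular summand $0$, and if $a=u+v$ is $2$-good then, since every unit is unit-regular, this is already a clear decomposition. What remains — and what I expect to be the real obstacle — is to deduce from (2) that $R$ has no nontrivial idempotents. The natural attempt: suppose $e^2=e$ with $e\neq 0,1$; then $e(1-e)=0$ with both factors nonzero, so neither $e$ nor $1-e$ is a unit, whence by (2) each is $2$-good, say $e=u_1+u_2$ and $1-e=u_3+u_4$ with all $u_i\in U(R)$. One then tries to force a contradiction by playing these four-unit identities against the relations $e^2=e$ and $e(1-e)=0$ (e.g.\ multiplying $e=u_1+u_2$ on either side by $e$, by $1-e$, or by the $u_i^{-1}$). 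Pushing this last step through is the essential difficulty; should it resist, the natural fallback is to carry the hypothesis ``$R$ has no nontrivial idempotents'' along in statement (2) as well, after which the equivalence reduces entirely to the two routine implications above together with the unit-regular dichotomy.
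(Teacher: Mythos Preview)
Your treatment of $(1)\Rightarrow(2)$ is correct and is exactly the paper's argument: the paper also observes that when $0$ and $1$ are the only idempotents, the unit-regular elements are precisely $0$ and the units, so a clear decomposition $a=r+u$ forces $a\in U(R)$ or $a$ 2-good. Your key lemma just makes this dichotomy explicit.

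For $(2)\Rightarrow(1)$ the paper writes only ``obviously''. Your verification that $R$ is clear is fine and is all the paper could have meant; but you are right to isolate the claim that (2) forces $R$ to have no nontrivial idempotents as the real obstacle, and in fact this claim is \emph{false}. Take $R=\mathbb{Z}_3\times\mathbb{Z}_3$. The units are the four pairs with both coordinates nonzero, and every one of the five non-units is a sum of two units, e.g.
\[
(0,0)=(1,1)+(2,2),\quad (1,0)=(2,1)+(2,2),\quad (0,1)=(1,2)+(2,2),
\]
\[
(2,0)=(1,1)+(1,2),\quad (0,2)=(1,1)+(2,1).
\]
Thus (2) holds, yet $(1,0)$ is a nontrivial idempotent, so (1) fails. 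Hence your four-unit manipulations cannot produce a contradiction, and the paper's ``obviously'' conceals a genuine error in the stated equivalence. Your proposed fallback---carrying the hypothesis ``no nontrivial idempotents'' into (2)---is precisely the correct repair, after which both directions reduce to the routine arguments you have already written out.
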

\begin{proof}
Let $R$ be a clear and has not nontrivial idempotents.

Since 0 and 1 are the only ones idempotents in $R$, then 0 and $u\in U(R)$ are unit-regulars, i.e. all elements of $R$ are trivial clear elements, i.e. for any $a\in R$ we have that $a\in U(R)$ or $a$ is 2-good element.

The implication $(2)\to (1)$ is obviously.
\end{proof}

\section{Proofs}

We start our proofs with the following lemma.

\begin{lemma}\label{lem3.1}
  Let $R$ be a commutative elementary divisor ring, then for any full matrix $A\in R^{2\times2}$ there exist invertible matrices $P,Q\in GL_2(R)$ such that \[PAQ=\begin{pmatrix} 1 & 0 \\ 0 &  d  \end{pmatrix}\quad\text{ for some element }  d\in R.
  \]
\end{lemma}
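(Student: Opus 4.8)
The plan is to combine the diagonal reduction afforded by the elementary divisor hypothesis with the ideal-theoretic meaning of fullness. First, since $R$ is an elementary divisor ring, the matrix $A$ admits a diagonal reduction: there exist $P_0,Q_0\in GL_2(R)$ with
\[
P_0AQ_0=\begin{pmatrix} d_1 & 0 \\ 0 & d_2 \end{pmatrix},\qquad d_1\mid d_2,
\]
where in the commutative setting the total-divisor condition $Rd_2R\subseteq d_1R\cap Rd_1$ is just the divisibility $d_1\mid d_2$.

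Next I would show that $d_1$ is necessarily a unit. Multiplying by invertible matrices does not change the inner rank: since $A$ is full, $R^{2\times2}(P_0AQ_0)R^{2\times2}=R^{2\times2}AR^{2\times2}=R^{2\times2}$, so $B:=\left(\begin{smallmatrix} d_1 & 0 \\ 0 & d_2\end{smallmatrix}\right)$ is again full. Now for an arbitrary $2\times2$ matrix $B$, the two-sided ideal $R^{2\times2}BR^{2\times2}$ is exactly the set of $2\times2$ matrices with entries in the ideal $I\subseteq R$ generated by the entries of $B$ (one inclusion is immediate; the other follows from $E_{ik}BE_{lj}=b_{kl}E_{ij}$ for the matrix units $E_{ij}$). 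For $B=\left(\begin{smallmatrix} d_1 & 0 \\ 0 & d_2\end{smallmatrix}\right)$ this ideal is $(d_1,d_2)$, and since $d_1\mid d_2$ it collapses to $(d_1)$. Fullness of $B$ means $R^{2\times2}BR^{2\times2}=R^{2\times2}$, hence $(d_1)=R$, i.e. $d_1\in U(R)$.

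Finally, set $P:=\left(\begin{smallmatrix} d_1^{-1} & 0 \\ 0 & 1\end{smallmatrix}\right)P_0$ and $Q:=Q_0$; these remain invertible, and
\[
PAQ=\begin{pmatrix} d_1^{-1} & 0 \\ 0 & 1 \end{pmatrix}\begin{pmatrix} d_1 & 0 \\ 0 & d_2 \end{pmatrix}=\begin{pmatrix} 1 & 0 \\ 0 & d_2 \end{pmatrix},
\]
so the lemma holds with $d=d_2$.

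The only step that really needs care is the middle one: identifying $R^{n\times n}BR^{n\times n}$ with the matrix ring over the ideal generated by the entries of $B$, and then using $d_1\mid d_2$ to reduce $(d_1,d_2)$ to the principal ideal $(d_1)$, which fullness forces to be all of $R$. The rest is routine bookkeeping with invertible matrices.
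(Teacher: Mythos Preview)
Your proof is correct and follows essentially the same route as the paper: diagonal reduction, then use fullness together with $d_1\mid d_2$ to force $d_1\in U(R)$, then normalize to $d_1=1$. The paper's argument is terser at the key step (it simply asserts that $R^{2\times2}=R^{2\times2}DR^{2\times2}$ and $d_1\mid d_2$ imply $d_1\in U(R)$), whereas you spell out the identification of $R^{2\times2}BR^{2\times2}$ with $M_2((d_1,d_2))=M_2((d_1))$, which is a welcome clarification rather than a different approach.
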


\begin{proof}
As noted earlier, if $R$ is commutative elementary divisor ring, then for any full matrix $A\in R^{2\times2}$ there exist invertible matrices $P, T\in GL_2(R)$ such that $$PAT=\begin{pmatrix}  d_1 & 0 \\ 0 & d_2 \end{pmatrix}= D,$$ where $ d_1$ is a total divisor of $ d_2$. Since $R$ is commutative ring, then obviously $ d_1$ is divisor of $ d_2$. From
\[
R=R^{2\times2}\,A\,R^{2\times2}=R^{2\times2} D R^{2\times2}
\]
and from $ d_1$ is divisor $ d_2$ follows $ d_1\in  U(R)$, i.e. we can assume that $ d_1=1$.
\end{proof}

\begin{proof}[Proof of Theorem~\ref{theor1.1}]
Before starting Proof, note that if $A=\left(\begin{smallmatrix} 1 & 0 \\ 0 & d  \end{smallmatrix}\right)\in S^{2\times2}$ for any ring $S,$ then
  $$
   \begin{pmatrix} 1 & 0 \\ 0 &  d  \end{pmatrix}
   \begin{pmatrix} 0 &1 \\ 1 & 0  \end{pmatrix}=
   \begin{pmatrix} 0 & 1 \\  d &0  \end{pmatrix}\quad \text{ and }\quad  \begin{pmatrix} 0 & 1 \\  d &0  \end{pmatrix}=\begin{pmatrix} 0 & 0 \\  d+1 &1  \end{pmatrix}+\begin{pmatrix} 0 & 1 \\ -1 & -1  \end{pmatrix}.
  $$

 So (by Proposition~\ref{prop2.1}), we have that $\left(\begin{smallmatrix} 0 & 1 \\  d &0  \end{smallmatrix}\right)$ is clean matrix. Also note that $\left(\begin{smallmatrix} 0 & 1 \\ 1&0  \end{smallmatrix}\right)\in GL_2(R)$.

  According to our comments and Lemma~\ref{lem3.1} we are finishing the proving of Theorem~\ref{theor1.1}.
\end{proof}

\begin{proof}[Proof of Theorem~\ref{theor1.2}]
  By Theorem~\ref{theor1.1} we have that $PAQ=E+U$ where $E^2=E$ and invertible matrices $P,Q,U\in GL_2(R)$. So $A=P^{-1}EQ^{-1}+P^{-1}UQ^{-1}$ and $P^{-1}EQ^{-1}=(P^{-1}EQ^{-1})QP(P^{-1}EQ^{-1})$. Obviously $QP\in GL_2(R)$ and $P^{-1}EQ^{-1}$ is a unit-regular matrix and $P^{-1}UQ^{-1}\in GL_2(R)$. That is $A$ is clear matrix.
\end{proof}

As well known, every commutative principal ideal domain is elementary divisor domain \cite{new1Sa}. Consequently of Theorem~\ref{theor1.2} we have the following result.

\begin{corollary}\label{newcor1}
Let $R$ be a commutative principal ideal domain. Then any full matrix $A\in R^{2\times2}$ is clear.
\end{corollary}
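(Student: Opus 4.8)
The plan is to obtain this as a direct corollary of Theorem~\ref{theor1.2}, so the only real work is to certify the hypothesis of that theorem. First I would invoke the classical fact, already quoted in the excerpt from \cite{new1Sa}, that every commutative principal ideal domain is an elementary divisor domain; this is exactly what the sentence preceding the corollary asserts. Hence a commutative PID $R$ is in particular a commutative elementary divisor ring, and Theorem~\ref{theor1.2} applies verbatim.

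Next I would apply Theorem~\ref{theor1.2} to the given full nonsingular matrix $A\in R^{2\times2}$. That theorem states precisely that over a commutative elementary divisor ring every full nonsingular $2\times2$ matrix is nontrivial clear. Since $A$ is full, the conclusion is immediate: $A=r+u$ where $r$ is a unit-regular matrix and $u\in GL_2(R)$. In particular $A$ is clear, which is what the corollary asks for.

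The one point requiring a word of care is the qualifier \emph{nonsingular}: Theorem~\ref{theor1.2} is stated for full \emph{nonsingular} matrices, whereas the corollary speaks of any full matrix $A$. Over a commutative domain $R$ this gap is harmless, because a full $2\times2$ matrix over a commutative domain is automatically nonsingular. Indeed, for a $2\times2$ matrix over a commutative domain the inner rank equals $2$ exactly when the determinant is nonzero; a matrix of determinant zero factors as a product of a $2\times1$ by a $1\times2$ matrix (its columns being linearly dependent over the field of fractions and hence, after clearing denominators and using that $R$ is a GCD domain, over $R$), so it has inner rank at most $1$ and is not full. Thus fullness and nonsingularity coincide in this setting, and the hypotheses match.

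I do not expect any genuine obstacle here, since the corollary is purely a specialization of Theorem~\ref{theor1.2}; the only substantive content is the reduction from PID to elementary divisor ring, which is a cited classical result, together with the elementary observation reconciling ``full'' with ``full nonsingular'' over a domain. The proof can therefore be stated in a couple of lines, essentially: $R$ is an elementary divisor ring by \cite{new1Sa}, every full matrix over a domain is nonsingular, and Theorem~\ref{theor1.2} gives the conclusion.
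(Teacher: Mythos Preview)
Your proposal is correct and follows the same route as the paper: cite that a commutative PID is an elementary divisor ring (the sentence preceding the corollary, attributed to \cite{new1Sa}) and then invoke Theorem~\ref{theor1.2}. The paper's argument is just this one line; your additional remark reconciling ``full'' with ``full nonsingular'' over a domain is a point the paper silently passes over, so your version is if anything slightly more careful.
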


Now let $R$ be a commutative B\'ezout domain in which any nonzero prime ideal contained in a unique maximal ideal, i.e. $R$ is $PM^*$ ring. By \cite[Theorem~1]{new5ZaGa} $R$ is an elementary divisor ring. So, in the same way from Theorem~\ref{theor1.2} we get the following result.

\begin{corollary}\label{newcor2}
Let $R$ be a commutative $PM^*$ B\'ezout domain. Then any full matrix $A\in R^{2\times2}$ is clear.
\end{corollary}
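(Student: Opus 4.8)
The plan is to reduce the statement to a direct application of Theorem~\ref{theor1.2}, whose sole hypothesis is that $R$ be a commutative elementary divisor ring. The only real work lies in verifying that hypothesis for a $PM^*$ B\'ezout domain and in reconciling the notions of \emph{full} and \emph{full nonsingular}.

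First I would invoke the structural result that a commutative $PM^*$ B\'ezout domain is an elementary divisor ring, namely \cite[Theorem~1]{new5ZaGa}. This is precisely where the $PM^*$ hypothesis is consumed: the condition that every nonzero prime ideal lie in a unique maximal ideal is what upgrades the B\'ezout domain to an elementary divisor ring. With this established, $R$ satisfies exactly the standing assumption of Theorem~\ref{theor1.2}.

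Next I would observe that, since $R$ is a commutative, hence integral, domain, the inner rank of any matrix over $R$ coincides with its row and column ranks, as noted in the preliminaries for B\'ezout domains. Consequently a full $2\times2$ matrix has inner rank $2$, which for a $2\times2$ matrix over a domain is equivalent to having nonzero determinant, i.e. to being nonsingular. Therefore every full matrix $A\in R^{2\times2}$ is automatically full nonsingular, and Theorem~\ref{theor1.2} applies verbatim to give that $A$ is nontrivial clear. Since a nontrivial clear element is in particular clear, the conclusion follows.

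This argument carries essentially no obstacle beyond bookkeeping, as the substantive content is outsourced to \cite[Theorem~1]{new5ZaGa} and to Theorem~\ref{theor1.2}. The one point deserving a moment's care is the identification of ``full'' with ``full nonsingular'' in the domain setting, so that the nonsingularity clause in the hypothesis of Theorem~\ref{theor1.2} does not impose an extra restriction; once that is noted, the corollary is immediate.
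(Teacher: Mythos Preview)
Your proposal is correct and follows essentially the same route as the paper: cite \cite[Theorem~1]{new5ZaGa} to obtain that a commutative $PM^*$ B\'ezout domain is an elementary divisor ring, then apply Theorem~\ref{theor1.2}. Your additional remark reconciling ``full'' with ``full nonsingular'' in the domain setting is a welcome clarification that the paper leaves implicit.
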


Note that elementary divisor ring is B\'ezout ring \cite[Theorem 1.2.7]{1Za}. Consequently of Theorem~\ref{theor1.2} and Proposition~\ref{prop2.1}(ii) we can proving Theorem~\ref{theor1.3}.

\begin{proof}[Proof of Theorem~\ref{theor1.3}]
By Theorem~\ref{theor1.2} we have implication $1)\rightarrow 2)$.

$2)\rightarrow 1)$. Let full matrix $A\in R^{2\times2}$ is clear. By Lemma~\ref{newlemma6} there exists invertible matrix $U\in GL_2(R)$ such that $UA$ and $AU$ are clean matricies. By Proposition~\ref{prop2.1}  $UA$ ($AU$) is an exchange element. By \cite[Proposition 1.1]{4Ni} a right (left) ideal $AUR^{2\times2}$ ($R^{2\times2}UA$) contains idempotent $E$ ($F$). Since $R$ is semi-simple and $J(R^{2\times2})=(J(R))^{2\times2}$ by proving of Proposition~1.9 \cite{4Ni}  idempotent $E$ $(F)$ is not trivial. By Proposition~\ref{prop2.1}(ii) we obtain that $R$ is an elementary divisor domain.
\end{proof}

\section{Some open questions}

\begin{description}
  \item[1.] Is the commutative clear ring a ring of unit-regular stable range 1?
  \item[2.] Is the notion of a ring of unit-regular stable range 1 a left-right symmetric?
\end{description}

\section*{Disclosure statement}

No potential conflict of interest was reported by the author(s).

\section*{ORCID}

\textit{Bohdan Zabavsky} http://orcid.org/0000-0002-2327-5379\\
\textit{Olha Domsha} http://orcid.org/0000-0003-1803-6055\\
\textit{Oleh Romaniv} http://orcid.org/0000-0002-8022-7859

\end{document}